\theoremstyle{plain}
\newtheorem{thm}{\indent Theorem}
\newtheorem{prop}{\indent Proposition}
\theoremstyle{remark}
\newtheorem{remark}{\indent Remark}
\theoremstyle{definition}
\newtheorem{defn}{\indent Definition}
\theoremstyle{definition}
\theoremstyle{definition}
\newtheorem*{PRalg}{\indent PR Algorithm}
\newcommand{\A}{\mathscr{A}}
\newcommand{\B}{\mathscr{B}}
\renewcommand{\AA}{\mathbb{A}}
\newcommand{\Y}{\mathscr{Y}}
\newcommand{\YY}{\mathbb{Y}}
\newcommand{\FF}{\mathbb{F}}
\newcommand{\RR}{\mathbb{R}}         
\newcommand{\E}{\mathsf{E}}
\newcommand{\prob}{\mathsf{P}}
\newcommand{\eps}{\varepsilon}
\renewcommand{\phi}{\varphi}
\newcommand{\iid}{\overset{\text{\tiny iid}}{\,\sim\,}}
\begin{document}

\title{Asymptotically optimal nonparametric empirical Bayes via predictive recursion}
\author{{Ryan Martin} \\ Department of Mathematics, Statistics, and Computer Science \\ University of Illinois at Chicago \\ \url{rgmartin@math.uic.edu} }
\date{\today}

\maketitle

\begin{abstract}
An empirical Bayes problem has an unknown prior to be estimated from data.  The predictive recursion (PR) algorithm provides fast nonparametric estimation of mixing distributions and is ideally suited for empirical Bayes applications.  This paper presents a general notion of empirical Bayes asymptotic optimality, and it is shown that PR-based procedures satisfy this property under certain conditions.  As an application, the problem of in-season prediction of baseball batting averages is considered.  There the PR-based empirical Bayes rule performs well in terms of prediction error and ability to capture the distribution of the latent features.    

\medskip

\textit{Keywords and phrases:} Batting average; compound decision problem; density estimation; high-dimensional; mixture model. 
\end{abstract}

\section{Introduction}
\label{S:intro}

In large-scale inference problems, the work of Stein suggests that component-wise optimal procedures are typically sub-optimal in the simultaneous inference problem.  The common theme in all works related to simultaneous inference is a notion of ``borrowing strength''---using information about all cases for each component problem.  An important example is the false discovery rate controlling procedure of \citet{benjamini.hochberg.1995} which uses the data itself to determine the critical region for the sequence of tests.  Shrinkage rules, penalized estimation, and hierarchical Bayes inference all can be given a similar ``information sharing'' interpretation.  

One interesting approach to simultaneous inference is \emph{empirical Bayes}, where a fully Bayesian model is assumed but, rather than elicitation of subjective priors or construction of non-informative objective priors, one uses the data itself to estimate the prior.  Parametric empirical Bayes, where a parametric form is assumed for the unknown prior, has been given considerable attention in the literature; see \citet{efron2010book} and the references therein.  When the number of cases is relatively small, the parametric approach is most reasonable.  Indeed, for Robbins' brand of nonparametric empirical Bayes to be successful, a tremendously large number of cases are needed.  But high-dimensional inference problems are now commonplace in statistical applications, so nonparametric empirical Bayes is now a promising area of research.  \citet[][p.~369]{efron2003} writes
\begin{quote}
What was unimaginable [then] is commonplace today.  Nonparametric empirical Bayes applies in an almost off-the-shelf manner to microarrays.  
\end{quote}

Theoretical analysis of empirical Bayes procedures looks at the limiting properties of the corresponding risk.  After a description of the decision problem and empirical Bayes approach in Sections~\ref{S:decision.theory}--\ref{S:empirical.bayes}, I propose an apparently new notion of asymptotic optimality.   Here I say that an empirical Bayes rule is asymptotically optimal if its risk (a function of observable data) converges almost surely to the Bayes risk.  Compare this to the classical definition of asymptotic optimality in \citet{robbins1964} based on convergence in mean of the empirical Bayes risk.  While neither definition is mathematically stronger than the other, I believe there is a considerable difference from a statistical point of view.  In particular, convergence in mean is not especially meaningful to a Bayesian who does not believe in averaging risk over the sample space.  Theorem~\ref{thm:optimality} gives a set of sufficient conditions for asymptotic optimality in this apparently new almost-sure sense.  

To implement nonparametric empirical Bayes, one needs a nonparametric estimate of the prior/mixing distribution.  This, in itself, is a challenging theoretical and computational problem.  The most popular techniques are based on nonparametric maximum likelihood and kernel estimators.  Two recent references on these in the context of empirical Bayes inference are \citet{brown.greenshtein.2009} and \citet{jiangzhang2009}.  But these methods can be computationally expensive and they are primarily focused on the Gaussian location problem.  A promising alternative is the \emph{predictive recursion} (PR) algorithm, designed for fast nonparametric estimation of mixing distributions in arbitrary mixture model problems, not only Gaussian; see \citet{nqz} and \citet{newton02}.  PR seems ideally suited for the empirical Bayes problem for, given the PR estimate, a plug-in empirical Bayes estimate of the optimal Bayes rule is immediately available.  

Performance of the PR-based empirical Bayes rule depends on convergence properties of the estimates produced by PR, and a fairly detailed picture of PR's convergence properties is now available.  For finite mixtures, \citet{ghoshtokdar} proved convergence of PR under strong conditions on the mixture kernel; \citet{martinghosh} extend this result using tools from stochastic approximation theory; and \citet{pr-finite} established a nearly root-$n$ rate of convergence.  The general case, described in more detail in Section~\ref{S:estimation}, was first attacked by \citet{tmg}.  They showed that, under suitable conditions, the PR estimates of the mixing and mixture distributions are both strongly consistent in the weak- and $L_1$-topologies, respectively.  Later, \citet{mt-rate} established convergence properties of the PR estimates under model mis-specification, and also gave a bound on the rate of convergence. 

In Section~\ref{S:estimation}, I use the known convergence theory for PR together with Theorem~\ref{thm:optimality} to show that the PR-based empirical Bayes rules are asymptotically optimal, under certain conditions, in hypothesis testing and point estimation problems.  Section~\ref{S:examples} contains a comparison of the PR-based empirical Bayes rules with several other parametric and nonparametric empirical Bayes rules in an interesting example of predicting batting averages in major league baseball.  It turns out that the PR-based rule is competitive with the others in the prediction problem, but is more flexible and gives a realistic picture of the distribution of latent hitting abilities.   \citet{mt-test} make a similar conclusion concerning the potential of PR-based empirical Bayes in the large-scale multiple testing applications.  These results together suggest that PR-based empirical Bayes is a promising alternative to existing methods and worthy of further investigation.

\section{The decision problem}
\label{S:decision.theory}


\subsection{Basic definitions}
\label{SS:defs}

The general decision problem has several components.  First is parameter space $\Theta$ that contains the unknown quantity of interest $\theta$, often called the ``state of nature.'' Second is an action space $\AA$, containing all possible actions, or decisions, $a$.  Third, there is a loss function $L(a,\theta) \geq 0$ that represents the penalty for taking action $a$ when the state of nature is $\theta$.  Finally, there is observable data $Y$ taking values in a measurable space $(\YY,\Y)$, equipped with a $\sigma$-finite measure $\mu$.  When the state of nature is $\theta$, the sampling distribution of $Y$, taking values in $\YY$, is $\prob_\theta$ and its density is $p_\theta = d\prob_\theta/d\mu$.  In the theoretical analysis that follows, I shall take each of these components as given.  However, these components themselves---particularly the loss function $L(a,\theta)$ and the model $p_\theta$---are often quite difficult to elicit in practice.  For this reason, there has been extensive work on loss and model robustness \citep[e.g.,][Sec.~3.10--3.11]{ghosh-etal-book}.  

With these four components in place, I can now describe the statistical decision problem.  When data $Y=y$ is observed, action $\delta(y) \in \AA$ is taken.  Action $\delta(y)$ is called a decision rule.  Then the average loss, or \emph{risk}, of decision rule $\delta$ when $\theta \in \Theta$ is the true state of nature is defined as 
\[ R(\delta,\theta) = \int L(\delta(y),\theta) p_\theta(y) \,d\mu(y). \]
For each decision rule $\delta$ there is a risk function $R(\delta,\cdot)$, and the goal of non-Bayesian decision theory is to choose the decision rule $\delta$ whose risk function $R(\delta,\cdot)$ is the ``smallest'' in some sense.  Often there is no such rule $\delta$ which gives a uniformly smallest risk function; in such cases, concessions must be made by imposing certain constraints, like unbiasedness or equivariance \citep{lehmann.casella.1998}.

\subsection{Bayesian decision theory}
\label{SS:bayes}

In the Bayesian decision problem, there is an additional piece of input required---a prior distribution for $\theta$.  Equip $\Theta$ with an appropriate $\sigma$-algebra $\B$ and let $F$ be a probability measure defined there.  On the product space $(\YY \times \Theta, \Y \otimes \B)$, define a probability measure by the density $p_\theta(y) \,dF(\theta) \,d\mu(y)$.  Two quantities related to this joint distribution are the marginal for $Y$, namely, 
\[ p_F(y) = \int_\Theta p_\theta(y) \,dF(\theta), \]
and the conditional distribution of $\theta$ given $Y=y$, described by Bayes' formula, 
\[ dF(\theta \mid y) = \{p_\theta(y) / p_F(y)\} \,dF(\theta). \]

When the prior $F$ is known, there is a well-developed Bayesian decision theory, described next.  On the other hand, when $F$ is unknown, as is often the case in practice, some special considerations are needed; see Section~\ref{S:empirical.bayes}.  When $F$ is known, define the Bayes risk of a decision rule $\delta$ to be the average risk $R(\delta,\theta)$ as $\theta$ various according to the prior $F$; in symbols, 
\[ \rho(\delta,F) = \int_\Theta R(\delta,\theta) \,dF(\theta). \]
The Bayesian decision-theorist seeks the decision rule $\delta=\delta_F$ that minimizes the Bayes risk $\rho(\delta,F)$.  I will write $\rho(F)=\rho(\delta_F,F)$ for this minimal Bayes risk.  Below I discuss the two most common decision problems: hypothesis testing and point estimation.  

The general hypothesis testing problem considers $H_0: \theta \in \Theta_0$ versus $H_1: \theta \notin \Theta_0$, where $\Theta_0 \subset \Theta$ has positive prior probability, i.e., $F(\Theta_0) > 0$.  Here the action space is $\AA = \{a_0,a_1\}$ where $a_i = \text{``choose hypothesis $i$''}$.  A Type~I error is choosing $a_1$ when $H_0$ is true, and a Type~II error is choosing $a_0$ when $H_1$ is true.  A typical loss function in such testing problem is given by $L(a_1,\theta) = \kappa_1 I_{\Theta_0}(\theta)$ and $L(a_0,\theta) = \kappa_2 (1-I_{\Theta_0}(\theta))$, where $\kappa_1,\kappa_2$ are finite positive numbers representing the cost of a Type~I, Type~II error, respectively.  The corresponding risk function is then a linear combination of the Type~I and Type~II error probabilities.  The Bayes rule is given by
\[ \delta_F(y) = \begin{cases} a_0 & \text{if $F(\Theta_0 \mid y) > r$} \\ a_1 & \text{if $F(\Theta_0 \mid y) \leq r$} \end{cases} \]
where $F(\Theta_0 \mid y)$ is the posterior probability for $\Theta_0$, given $Y=y$, and $r = \kappa_2/(\kappa_1 + \kappa_2)$ is the relative cost of a Type~II error.  These details are given in \citet[][pp.~163--164]{berger1984}.  It is interesting that, for a point-null $H_0: \theta = \theta_0$, the quantity $F(\{\theta_0\} \mid y)$ is exactly the local false discovery rate that has appeared fairly recently in the large-scale multiple testing context \citep[e.g.,][]{mt-test, suncai2007, efron2010book}.  

For the estimation problem, I shall assume $\theta$ is the estimand, so that $\AA = \Theta$.  The most common loss function in such problems is square-error loss, i.e., $L(a,\theta) = \|a-\theta\|^2$, but other losses can be handled similarly.  For square-error loss, the Bayes rule $\delta_F(y)$ is the posterior mean of $\theta$ given $Y=y$, i.e., $\delta_F(y) = \int_\Theta \theta \,dF(\theta \mid y)$.

\section{Empirical Bayes}
\label{S:empirical.bayes}


\subsection{Setup, motivation, and classical developments}
\label{SS:setup}

In the previous section, there was a single observation $Y$ (not necessarily real-valued) and a corresponding single parameter $\theta$ (also not necessarily real-valued).  Corresponding hierarchical model for $Y$ is as follows:
\begin{equation}
\label{eq:hierarchy}
Y \mid \theta \sim p_\theta(y) \quad \text{and} \quad \theta \sim F,
\end{equation}
In this case, very little can be done when $F$ is unknown; indeed, $Y$ provides information about just a single observation from $F$ which, in turn, contributes nothing to one's lack of knowledge about $F$.  However, nowadays, there are applications which can be modeled by many samples from the hierarchical model \eqref{eq:hierarchy}.  Specifically, pairs $(Y_1,\theta_1), \ldots,(Y_n,\theta_n)$ are sampled independently from the joint $(Y,\theta)$ distribution in \eqref{eq:hierarchy}, but only the $Y$'s are observed.  DNA microarray technologies and the related statistical problems spurred much of the growth in this area; see \citet{efron2008, efron2010book}.  This model has two key features:
\begin{itemize}
\item the number of cases $n$ is typically very large, say tens of thousands; 
\vspace{-2mm}
\item the cases are parallel in the sense that each $Y_i$ has a corresponding (latent) $\theta_i$ which is an independent copy of the single $\theta$ seen in the previous sections.  
\end{itemize}
Together, these two features provide the following intuition: by treating the observed data $(Y_1,\ldots,Y_n)$ as a proxy for the unobserved parameters $(\theta_1,\ldots,\theta_n)$, a large independent sample from $F$, it should be possible to estimate $F$ empirically.  

The canonical high-dimensional model is the normal mean model, i.e., $Y_i \mid \theta_i \sim {\sf N}(\theta_i, 1)$, $i=1,\ldots,n$.  This seemingly simple model has given rise to many fundamental developments in statistics.  Indeed, \citet{stein1981} showed that the high-dimensionality alone is cause for statisticians to rethink their approach.  The fundamental idea behind modern approaches to high-dimensional problems is that inference can be improved by sharing information between cases, and frequentists and Bayesians alike have incorporated this idea into their respective analyses; e.g., FDR controlling procedures \citep{benjamini.hochberg.1995} and hierarchical Bayes methods \citep{scottberger}.  The empirical Bayes approach \citep[e.g.,][]{robbins1964} falls somewhere in between the frequentist and Bayesian extremes.  It starts with a Bayesian model and uses the observed data $Y_1,\ldots,Y_n$ to estimate the prior.  This easily and naturally facilitates the sharing of information between cases.  Parametric empirical Bayes methods have received considerable attention; see \citet{efron2010book} and the references therein.  The James--Stein estimator is a classical example, where $(\theta_1,\ldots,\theta_n)$ is assigned a Gaussian prior with variance estimated from the data.  But the very-high-dimension of modern problems suggests that the more robust \emph{nonparametric} empirical Bayes methods might be successful.

\subsection{Robbins' nonparametric empirical Bayes}
\label{SS:npeb}

In the high-dimensional case, with $n$ large, it may not be necessary to impose parametric constraints on the unknown prior.  \citet{robbins1964} considered nonparametric estimation of the prior $F$ based on $Y_1,\ldots,Y_n$.  With an estimate $\widehat F_n$ of $F$, the Bayes rule $\delta_F$ can be replaced by a plug-in estimate $\hat\delta_n = \delta_{\widehat F_n}$ to be used in a future decision problem.  

\begin{defn}
\label{def:risk}
Let $\widehat F_n$ be an estimate of $F$ based on data $Y_1,\ldots,Y_n$ from the model \eqref{eq:hierarchy}.  Define $\hat \delta_n = \delta_{\widehat F_n}$ to be the decision rule obtained by plugging in $\widehat F_n$ for the true $F$ in the Bayes rule $\delta_F$.  Then $\rho_n(F) = \rho(\hat\delta_n, F)$ represents the risk incurred by using $\hat \delta_n$ in a future decision problem.  
\end{defn}

The decision rule $\hat \delta_n$ in Definition~\ref{def:risk} is called an empirical Bayes rule and $\rho_n(F)$ the corresponding empirical Bayes risk.  It is important to note that Definition~\ref{def:risk} is not the same as that of \citet{robbins1964} and others; this classical risk involves an expectation over the observed data sequence.  Therefore, Robbins' empirical Bayes risk is a \emph{number} whereas $\rho_n(F)$ is a \emph{random variable}.  

\begin{remark}
The decision-theoretic formulation of the empirical Bayes problem given above is based on minimizing the risk in a future decision problem.  In practice, however, we are often interested in the ``compound problem'' of making decisions about $\theta_1,\ldots,\theta_n$ simultaneously.  The relationship between an empirical Bayes problem and the so-called compound decision problem is discussed in \citet{samuel1967} and \citet{copas1969}.  The Bayes rule for the compound problem is to apply the Bayes rule for a future decision to each component problem.  Therefore, the natural approach that is typically used in high-dimensional applications is to apply the resulting empirical Bayes rule for a future decision to each component problem.  See Section~\ref{S:examples}.  
\end{remark}

\section{Asymptotic optimality}
\label{S:optimality}

By definition, $\rho_n(F) \geq \rho(F)$ almost surely.  But, as $n \to \infty$, we have more data with which to construct $\widehat F_n$, so we might expect to be able to get close to the Bayes risk asymptotically.  It is in this regard that we measure the performance of $\hat \delta_n$.  

\begin{defn}
\label{def:optimality}
Let $\FF$ be a given collection of probability measures, assumed to contain the true prior $F$.  A sequence of decision functions $\hat \delta_n$ is asymptotically optimal relative to $\FF$ if $\rho_n(F) \to \rho(F)$ almost surely for all $F \in \FF$.
\end{defn}

Asymptotic optimality in Defintion~\ref{def:optimality} is different than that of Robbins.  Indeed, Robbins' asymptotic ``$\E$-optimality'' includes an additional expectation over the data sequence $Y_1,Y_2,\ldots$.  While asymptotic optimality need not imply asymptotic $\E$-optimality, the difference is important from a statistical point of view: the former means that, for large $n$, the decision procedure has small risk for (almost) \emph{every data sequence}, whereas the latter means the decision procedure does well only \emph{on average}.  Clearly, asymptotic $\E$-optimality means very little to a Bayesian who does not believe in averaging over $\YY$.  

Next is a general theorem on asymptotic optimality, similar to that for asymptotic $\E$-optimality found in \citet{deelyzimmer1976}.

\begin{thm}
\label{thm:optimality}
For $F \in \FF$, assume that $\hat \delta_n(y) \to \delta_F(y)$ almost surely for $\mu$-almost all $y$, that $L(\hat \delta_n(y),\theta) \to L(\delta_F(y), \theta)$ almost surely for $(\mu \times F)$-almost all $(y,\theta)$, and that there exists a sequence of integrable functions $h_n(y,\theta) = h_n(y,\theta; Y_1,\ldots,Y_n)$ such that 
\begin{itemize}
\item $h_n(y,\theta) \to h(y,\theta)$ almost surely for $(\mu \times F)$-almost all $(y,\theta)$,
\vspace{-2mm}
\item $L(\hat \delta_n(y),\theta) \leq h_n(y,\theta)$ almost surely for all $n$ and for $(\mu \times F)$-almost all $(y,\theta)$, and
\vspace{-2mm}
\item $\int_\YY \int_\Theta h_n(y,\theta) p_\theta(y) \,dF(\theta)\,d\mu(y) \to \int_\YY \int_\Theta h(y,\theta) p_\theta(y)\,dF(\theta)\,d\mu(y) < \infty$ almost surely.  
\end{itemize}
Then $\hat \delta_n$ is asymptotically optimal relative to $\FF$.    
\end{thm}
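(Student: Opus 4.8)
The plan is to recognize the claim $\rho_n(F)\to\rho(F)$ as an instance of the \emph{generalized} dominated convergence theorem (Pratt's lemma), applied for almost every fixed data sequence, with the three bullet-point hypotheses supplying exactly the ingredients that lemma needs. The only genuine subtlety is bookkeeping the two independent sources of randomness---the sample $Y_1,\ldots,Y_n$ that determines $\hat\delta_n$, and the ``future'' pair $(y,\theta)$ over which the risk integrates---which I would disentangle with Fubini's theorem at the outset.

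First I would unfold the definitions. Writing out $\rho_n(F)=\rho(\hat\delta_n,F)$ and $\rho(F)=\rho(\delta_F,F)$ exhibits both as integrals of a loss against the joint probability measure $d\nu(y,\theta)=p_\theta(y)\,d\mu(y)\,dF(\theta)$ on $\YY\times\Theta$:
\[ \rho_n(F) = \int L(\hat\delta_n(y),\theta)\, d\nu(y,\theta) \quad\text{and}\quad \rho(F) = \int L(\delta_F(y),\theta)\, d\nu(y,\theta). \]
So the target is to show $\int L(\hat\delta_n(\cdot),\cdot)\,d\nu \to \int L(\delta_F(\cdot),\cdot)\,d\nu$ almost surely in the data, where the integrands are nonnegative since $L\ge 0$.

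Second I would fix a generic data sequence. Each hypothesis is an almost-sure statement that, read jointly in the data and in $(y,\theta)$ and combined via Fubini--Tonelli, says the corresponding exceptional set is null in the appropriate product space. Intersecting the finitely many data-null sets yields a single set of full $\prob$-probability on which, simultaneously: $L(\hat\delta_n(y),\theta)\to L(\delta_F(y),\theta)$ for $\nu$-a.e.\ $(y,\theta)$; $0\le L(\hat\delta_n(y),\theta)\le h_n(y,\theta)$ for all $n$ and $\nu$-a.e.\ $(y,\theta)$; $h_n\to h$ $\nu$-a.e.; and $\int h_n\,d\nu \to \int h\,d\nu < \infty$. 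With the data sequence frozen, Pratt's lemma applies, which is just two uses of Fatou: applied to $L(\hat\delta_n(\cdot),\cdot)\ge 0$ it gives $\int L(\delta_F(\cdot),\cdot)\,d\nu \le \liminf \rho_n(F)$, and applied to $h_n-L(\hat\delta_n(\cdot),\cdot)\ge 0$ together with $\int h_n\,d\nu\to\int h\,d\nu<\infty$ it gives $\limsup \rho_n(F)\le \int L(\delta_F(\cdot),\cdot)\,d\nu=\rho(F)$. Sandwiching yields $\rho_n(F)\to\rho(F)$ for every frozen sequence in the full-probability set, which is precisely almost-sure convergence.

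I expect the only real obstacle to be the measure-theoretic setup in the second step: one must verify that ``almost surely for $\mu$-almost all $y$'' is legitimately a statement about a null set in the product of the data space with $(\YY,\mu)$ (and likewise with $(\Theta,F)$), so that Fubini applies and a single data-null set governs all three hypotheses at once. This requires joint measurability of $(y,\text{data})\mapsto\hat\delta_n(y)$, which I would note is implicit in the construction of $\widehat F_n$ from the data. I would also remark that the pointwise convergence $\hat\delta_n(y)\to\delta_F(y)$ is not invoked directly---it is the \emph{loss} convergence that feeds Pratt's lemma---and is listed chiefly because in applications one verifies it and then deduces the loss convergence from continuity of $L$.
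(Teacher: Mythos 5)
Your proposal is correct and follows essentially the same route as the paper: both reduce $\rho_n(F)\to\rho(F)$ to an interchange of limit and integration justified by Pratt's generalized dominated convergence theorem, after intersecting the finitely many almost-sure events so that a single full-probability set of data sequences governs all hypotheses at once. Your additional remarks---spelling out Pratt's lemma via two applications of Fatou, and noting that the convergence of $\hat\delta_n(y)$ itself is not what feeds the argument---are elaborations of, not departures from, the paper's proof.
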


\begin{proof}[\indent Proof]
The proof is a simple application of the dominated convergence theorem or, more specifically, the main theorem of \citet{pratt1960}.  Write 
\begin{equation}
\label{eq:interchange}
\lim_{n \to \infty} \rho_n(F) = \lim_{n \to \infty} \int_\YY \int_\Theta L(\hat \delta_n(y),\theta) p_\theta(y) \,dF(\theta) \,d\mu(y). 
\end{equation}
It remains to show that, with probability 1, limit and integration can be interchanged.  Let $\A^\infty$ be the appropriate $\sigma$-algebra on $\YY^\infty$ and let $\prob_F^\infty$ be the distribution measure of $Y_1,Y_2,\ldots$.  There are five ``$\prob_F^\infty$-almost surely''~assumptions made in the theorem: one about $\hat \delta_n$, one about the loss $L(\hat \delta_n,\theta)$, and three about $h_n$.  Let $A_1,\ldots,A_5 \in \A^\infty$ denote the events where these respective assumptions are true.  By assumption, $\prob_F^\infty(A_i) = 1$, for $i=1,\ldots,5$.  For any data sequence in $A_1 \cap \cdots \cap A_5$, interchange of limit and integration in \eqref{eq:interchange} holds by Pratt's theorem.  The claim follows since $\prob_F^\infty(A_1 \cap \cdots \cap A_5) = 1$.    
\end{proof}

The assumption that the loss converges can be easily checked in practice.  For example, to estimate a real $\theta$, the loss $L(a,\theta)$ is typically a continuous function of the action (estimate) $a$ and the parameter $\theta$ such as $L(a,\theta) = (a-\theta)^2$.  In other problems, such as hypothesis testing, the action space $\AA$ has only a finitely many elements and the desired loss convergence obtains in all but the strangest of cases.  


\section{Nonparametric estimation of the prior $F$}
\label{S:estimation}

\subsection{Predictive recursion}
\label{SS:pr}

Robbins' nonparametric empirical Bayes analysis requires a nonparametric estimate of the prior $F$.  There are a variety of methods available for this task, e.g., nonparametric maximum likelihood, deconvolution, etc.  Here I focus on a relatively new method, namely \emph{predictive recursion}.  It is interesting that the predictive recursion (PR) algorithm boils down to a stochastic approximation \citep{martinghosh}, one of Robbins' other great contributions \citep[see][]{robbinsmonro, lai}. 

PR is a fast, stochastic algorithm for estimating mixing distributions in nonparametric mixture models.  PR's original motivation was as a computationally efficient alternative to Markov chain Monte Carlo methods in fitting Bayesian Dirichlet process mixture models \citep{nqz, newton02}.  If, or to what extent, the PR estimates approximate the Bayesian estimates in a Dirichlet process mixture model remains an open question; however, simulations and theoretical arguments in \citet{tmg} indicate that PR is indeed an attractive alternative.  

Let $\prob_F$ be the marginal distribution of the individual $Y_i$'s, having density $p_F(y) = \int p_\theta(y) \,dF(\theta)$ with respect to $\mu$.  For observations $Y_1,\ldots,Y_n$ from $\prob_F$, the PR algorithm for nonparametric estimation of $F$ and $p_F$ is as follows.

\begin{PRalg}
Choose a starting value $F_0$ to initialize the algorithm, and a sequence of weights $\{w_i: i \geq 1\} \subset (0,1)$.  For $i=1,\ldots,n$, repeat 
\begin{align}
p_{i-1}(y) & = \int p_\theta(y) \,dF_i(\theta), \label{eq:pr.mixture} \\
dF_i(\theta) & = (1-w_i) \,dF_{i-1}(\theta) + w_i \, p_\theta(Y_i) \,dF_{i-1}(\theta) / p_{i-1}(Y_i). \label{eq:pr.mixing} 
\end{align}
Produce $F_n$ and $p_n=p_{F_n}$ as the final estimates of $F$ and $p_F$, respectively.  
\end{PRalg}

An important property of PR is its speed and ease of implementation.  Also, PR has the unique ability to estimate a mixing distribution $F$ which is absolutely continuous with respect to any user-specified dominating $\sigma$-finite measure $\nu$ on $\Theta$.  Indeed, it is easy to see that $F_n$ dominated by $F_0$ for all $n$.  Therefore, if $F_0$ has a density with respect to $\nu$, then so does $F_n$.  Compare this to the nonparametric maximum likelihood estimate which is a.s.~discrete \citep{lindsay1995}.  This property is particularly important in the multiple testing application in \citet{mt-test}, as identifiability of the model parameters requires a careful handling of the underlying dominating measure.  

I should also point out that the PR estimates $F_n$ and $p_n$ depend on the order in which the data enter the algorithm.  This dependence is typically weak, especially for large $n$, but to remove this dependence, it is standard to average the PR estimates over several randomly chosen data permutations; see Section~\ref{S:examples}.  \citet{tmg} make a formal case, based on Rao--Blackwellization, for averaging PR over permutations.

A summary of PR's convergence properties was given in Section~\ref{S:intro}.  Here I state a theorem of \citet{mt-rate} which describes the behavior of $F_n$ and $p_n$ in the case where $\Theta$ is not necessarily finite.  This result will be used to establish asymptotic optimality of PR-based nonparametric empirical Bayes rules in Section~\ref{SS:pr.npeb}.  Let $\FF$ be (a subset of) the set of probabilities measures $F$ on $\Theta$.  For densities $p$ and $p'$ on $\YY$, let $K(p,p') = \int \log(p/p') p\,d\mu$ be the Kullback--Leibler divergence of $p'$ from $p$.  Consider the following set of assumptions. 

\begin{itemize}
\item[A1.] The set $\FF$ of candidate $F$'s is precompact in the weak topology.
\vspace{-2mm}
\item[A2.] $\theta \mapsto p_\theta(y)$ is bounded and continuous for $\mu$-almost all $y$.
\vspace{-2mm}
\item[A3.] The PR weights $(w_n) \subset (0,1)^\infty$ satisfy $\sum_n w_n = \infty$ and $\sum_n w_n^2 < \infty$.  
\vspace{-2mm}
\item[A4.] There exists $C < \infty$ such that $\sup_{\theta_1,\theta_2,\theta_3} \int (p_{\theta_1}/p_{\theta_2})^2 p_{\theta_3} \,d\mu \leq C$.
\vspace{-2mm}
\item[A5.] Identifiability: If $p_F = p_{F'}$ $\mu$-almost everywhere for some $F,F' \in \FF$, then $F=F'$.
\vspace{-2mm}
\item[A6.] For any $\eps > 0$ and any compact $\YY' \subset \YY$, there exists a compact $\Theta' \subset \Theta$ such that $\int_{\YY'} p_\theta(y) \,d\mu(y) < \eps$ for all $\theta \in \Theta'$.  
\end{itemize}

\begin{thm}[\citealt{mt-rate}]
\label{thm:pr}
Under A1--A4, $K(p_F, p_n) \to 0$ $\prob_F$-almost surely.  Furthermore, under A1--A6, $F_n \to F$ in the weak topology, $\prob_F$-almost surely.  
\end{thm}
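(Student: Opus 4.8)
Because Theorem~\ref{thm:pr} is quoted from \citet{mt-rate}, I only outline the route I would take. The plan is to treat predictive recursion as a stochastic approximation, in the spirit of \citet{martinghosh}, and to prove the two assertions in turn: first the Kullback--Leibler consistency of the mixture estimate $p_n$ via an almost-supermartingale argument, and then the weak consistency of the mixing estimate $F_n$ by combining that consistency with identifiability and tightness.

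First I would fix the filtration $\mathcal{G}_i = \sigma(Y_1,\ldots,Y_i)$, under which $F_i$ and hence $p_{F_i}$ are measurable, and rewrite the update \eqref{eq:pr.mixing} at the level of the mixture density as $p_{F_i}(y)/p_{F_{i-1}}(y) = 1 + w_i\{g_i(y)-1\}$, where $g_i(y) = \int p_\theta(y)\,p_\theta(Y_i)\,dF_{i-1}(\theta)\,/\,\{p_{F_{i-1}}(y)\,p_{i-1}(Y_i)\}$. Taking $V_i = K(p_F, p_{F_i})$ as a Lyapunov function and expanding $\log\{p_{F_{i-1}}/p_{F_i}\} = -\log(1+w_i\{g_i-1\})$ to second order (legitimate because the $w_i$ are small under A3), then integrating against $p_F\,d\mu$, yields a one-step bound of the form $V_i - V_{i-1} \le -w_i \int\{g_i-1\}p_F\,d\mu + \tfrac12 w_i^2 R_i$, with a remainder $R_i$ that A4 bounds uniformly over the current mixing distribution.

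The decisive step is to condition on $\mathcal{G}_{i-1}$, that is, to average over $Y_i\sim p_F$. A direct computation shows $\E[\int\{g_i-1\}p_F\,d\mu \mid \mathcal{G}_{i-1}] = \int a(\theta)^2\,dF_{i-1}(\theta) - 1$, where $a(\theta)=\int p_\theta(y)\{p_F(y)/p_{F_{i-1}}(y)\}\,d\mu(y)$; since $\int a\,dF_{i-1}=1$, Jensen's inequality makes this a nonnegative, variance-type discrepancy $D(p_{F_{i-1}})\ge 0$ that vanishes exactly when $p_{F_{i-1}}=p_F$. With the $O(w_i^2)$ remainder controlled by A4, I obtain $\E[V_i\mid\mathcal{G}_{i-1}] \le V_{i-1} - w_i D(p_{F_{i-1}}) + C w_i^2$. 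Since $\sum_i w_i^2<\infty$ (A3), the Robbins--Siegmund almost-supermartingale convergence theorem gives that $V_i$ converges almost surely and $\sum_i w_i D(p_{F_{i-1}})<\infty$; because $\sum_i w_i=\infty$, this forces $D(p_{F_{i-1}})\to 0$ along a subsequence, and combining with the a.s.\ convergence of $V_i$ pins the limit at $V_\infty=0$, i.e.\ $K(p_F,p_n)\to 0$. Assumption A2 is what keeps $g_i$ and all the integrals well defined throughout.

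For the weak-convergence claim I would upgrade this density consistency. Pinsker's inequality turns $K(p_F,p_n)\to 0$ into $\|p_n-p_F\|_1\to 0$ almost surely. By A1 the sequence $(F_n)$ is precompact in the weak topology, so any subsequence has a further subsequence $F_{n_k}$ converging weakly to some $F^\dagger$; the escape-of-mass control A6 together with the bounded continuity in A2 lets me pass the limit inside the mixture integral, giving $p_{F^\dagger}=p_F$ $\mu$-a.e. after matching with the $L_1$ limit, whence A5 forces $F^\dagger=F$. As every subsequence yields the same weak limit $F$, the whole sequence converges. The main obstacle is the third paragraph: certifying the sign and the variance interpretation of the conditional drift $D(p_{F_{i-1}})$ and bounding the remainder uniformly in the current mixing distribution, which is precisely the purpose of A4 and also where the well-specified hypothesis $F\in\FF$ enters, so that the drift vanishes at $p_F$ rather than at a misspecified projection; a secondary difficulty is justifying the limit--integral interchange in the weak-convergence step, which relies on A6 to prevent mass from escaping the boundary of $\Theta$.
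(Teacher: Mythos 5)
The paper offers no proof of Theorem~\ref{thm:pr}---it is imported verbatim from \citet{mt-rate}---so the only meaningful comparison is with that source and with \citet{tmg}, and your outline reproduces their argument faithfully: the Kullback--Leibler divergence as a Lyapunov function for the stochastic approximation, the second-order expansion of the log density ratio with the remainder controlled by A4, the Jensen-type nonnegativity of the conditional drift $D(p_{F_{i-1}})=\int a^2\,dF_{i-1}-1$, the Robbins--Siegmund almost-supermartingale theorem under A3, and the precompactness/identifiability subsequence argument (A1, A5, A6) for weak convergence of $F_n$. The one place your sketch is thinner than the published proof is the passage from the conclusions $\sum_i w_i D(p_{F_{i-1}})<\infty$ and $V_i\to V_\infty$ to the identification $V_\infty=0$: since $D(p_{F_{i-1}})$ is only forced to zero along a subsequence, one needs a separation lemma of the form $\inf\{D(G): K(p_F,p_G)\ge\delta,\ G\in\overline{\FF}\}>0$ for every $\delta>0$, established via A1--A2 and the convexity of $G\mapsto K(p_F,p_G)$, and this is also where one must verify that $D(G)=0$ actually forces $p_G=p_F$ (it is not immediate, since $\int a\,dG=1$ holds for every $G$) rather than merely identifying a stationary point of the recursion.
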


\begin{remark}
\label{re:pr.conditions}
\citet{mt-rate} discuss the conditions and ways they can be relaxed.  Condition A4 is the strongest, but it holds generally for exponential families whose sufficient statistic has bounded moment-generating function on $\Theta$.  
\end{remark}

\begin{remark}
\label{re:pr.weight}
The PR weights are often taken as $w_n = (n+1)^{-\gamma}$ for some $\gamma \in (1/2,1]$, which clearly satisfies A3.  If $\gamma \in (2/3,1]$, then \citet{mt-rate} establish a $o(n^{1-\gamma})$ bound on the Kullback--Leibler rate of convergence.  
\end{remark}

A generalization of the nonparametric mixture model $Y_1,\ldots,Y_n \iid p_F(y)$ is the semiparametric problem where, in addition to the unknown prior/mixing distribution $F$, there is a finite-dimensional parameter $\omega$ to be estimated as well.  \citet{mt-prml} propose an extension of the PR algorithm for simultaneous estimation of $(F,\omega)$, based on the interesting construction of a PR-based likelihood function for $\omega$.  They show that this PR-based likelihood function approximates the marginal likelihood under a Bayesian Dirichlet process mixture model.  Applications of this methodology can be found in \citet{mt-test} and \citet{prml-finite}.  

\subsection{Nonparametric empirical Bayes via PR}
\label{SS:pr.npeb}

The advantage of Robbins' brand of nonparametric empirical Bayes is that, once an estimate $F_n$ of $F$ is available, the inference problem is straightforward.  That is, one simply finds the Bayes rule $\delta_F$ that depends on the unknown $F$, and then replaces that with $\delta_n = \delta_{F_n}$.  PR seems to be ideally suited to this problem.  The available asymptotic theory for the PR estimate $F_n$ in Theorem~\ref{thm:pr} will be applied, along with Theorem~\ref{thm:optimality}, to prove that the PR-based plug-in nonparametric empirical Bayes rule is asymptotically optimal in the sense of Definition~\ref{def:optimality}.  

Start with the hypothesis testing problem described in Section~\ref{SS:bayes}.  If $F_n$ and $p_n$ are estimates of $F$ and $p_F$ based on the PR algorithm, then the corresponding empirical Bayes rule $\delta_n(y) = \delta_{F_n}(y)$ is given by
\begin{equation}
\label{eq:PRtest}
\delta_n(y) = \begin{cases} a_0 & \text{if $F_n(\Theta_0 \mid y) > r$} \\ a_1 & \text{if $F_n(\Theta_0 \mid y) \leq r$} \end{cases} 
\end{equation}
We now prove the following asymptotic optimality result.

\begin{prop}
\label{prop:test-optim}
If $\prob_\theta$ is a continuous distribution, $L(a,\theta)$ is as described in Section~\ref{SS:bayes}, and the conditions of Theorem~\ref{thm:pr} hold, then $\delta_n$ in \eqref{eq:PRtest} is asymptotically optimal with respect to $\FF$ in the sense of Definition~\ref{def:optimality}.  
\end{prop}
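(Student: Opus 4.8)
The plan is to verify the three hypotheses of Theorem~\ref{thm:optimality} for the testing setup, using the weak convergence $F_n \to F$ provided by Theorem~\ref{thm:pr}. First I would establish pointwise convergence of the empirical Bayes rule, i.e.~$\delta_n(y) \to \delta_F(y)$ for $\mu$-almost all $y$. Since $\delta_n$ and $\delta_F$ are threshold rules comparing the posterior probability $F_n(\Theta_0 \mid y)$ (respectively $F(\Theta_0 \mid y)$) to the fixed cutoff $r$, the key is to show $F_n(\Theta_0 \mid y) \to F(\Theta_0 \mid y)$. Writing $F_n(\Theta_0 \mid y) = \int_{\Theta_0} p_\theta(y)\,dF_n(\theta) \,/\, \int_\Theta p_\theta(y)\,dF_n(\theta)$, both integrals converge by weak convergence of $F_n$ together with the boundedness and continuity of $\theta \mapsto p_\theta(y)$ assumed in A2. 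The numerator requires a little care because $\Theta_0$ is not the whole space, but as long as $F$ assigns no mass to the topological boundary $\partial\Theta_0$ for the relevant $y$, the portmanteau theorem delivers convergence of $\int_{\Theta_0} p_\theta(y)\,dF_n(\theta)$ as well.

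Second I would check loss convergence: $L(\delta_n(y),\theta) \to L(\delta_F(y),\theta)$ for $(\mu\times F)$-almost all $(y,\theta)$. Because the action space $\AA = \{a_0,a_1\}$ is finite and the loss depends on the action only through which of $a_0,a_1$ is selected, it suffices that $\delta_n(y) = \delta_F(y)$ eventually. This holds at every $y$ where $\delta_n(y)\to\delta_F(y)$ in the discrete action space, provided the limiting posterior probability $F(\Theta_0\mid y)$ does not equal the threshold $r$ exactly (a boundary case where the rule can flip in the limit). This is where the continuity assumption on $\prob_\theta$ enters: it guarantees that the set of $y$ with $F(\Theta_0 \mid y) = r$ is $\mu$-negligible, so the loss converges off a null set.

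Third I would supply the integrable dominating sequence $h_n$. Here the finiteness of the loss is a genuine convenience: the testing loss is bounded above by the constant $M = \max(\kappa_1,\kappa_2)$, so I can simply take $h_n(y,\theta) = h(y,\theta) \equiv M$. This constant trivially converges to itself, dominates $L(\delta_n(y),\theta)$ for all $n$, and its integral $\int_\YY\int_\Theta M\, p_\theta(y)\,dF(\theta)\,d\mu(y) = M < \infty$ converges to the same finite value. With all three hypotheses verified, Theorem~\ref{thm:optimality} yields $\rho_n(F)\to\rho(F)$ almost surely, which is the asserted asymptotic optimality relative to $\FF$.

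The main obstacle I anticipate is the boundary issue in the first two steps: weak convergence controls $\int g\,dF_n$ only for bounded continuous $g$ and, for sets, only when $F(\partial\Theta_0)=0$ and when the posterior does not land exactly on the cutoff $r$. The continuity of $\prob_\theta$ is precisely the hypothesis that rules out a positive-$\mu$ set of ties at $r$, so the crux is to argue cleanly that these exceptional configurations form a $(\mu\times F)$-null set. Once that is in hand, everything else reduces to the elementary observations that the action space is finite and the loss is bounded.
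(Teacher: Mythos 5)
Your proposal follows essentially the same route as the paper's proof: establish $F_n(\Theta_0\mid y)\to F(\Theta_0\mid y)$ from the weak convergence in Theorem~\ref{thm:pr}, use the continuity of $\prob_\theta$ to rule out ties at the threshold $r$ so that the discrete-valued rule converges, and take the constant bound on the loss as the dominating sequence $h_n$ in Theorem~\ref{thm:optimality}. The only difference is that you spell out the portmanteau step (requiring $F(\partial\Theta_0)=0$) that the paper dismisses as ``clear,'' which is a worthwhile clarification but not a different argument.
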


\begin{proof}[\indent Proof]
Under the conditions of Theorem~\ref{thm:pr}, it is clear that $F_n(\Theta_0 \mid y) \to F(\Theta_0 \mid y)$ almost surely for all $y$.  The continuity assumption implies the true posterior probability $F(\Theta_0 \mid y)$ is off the threshold $r$ with probability~1, so it then follows that $\delta_n(y) \to \delta_F(y)$ almost surely for each $y$.  Since the loss $L(a,\theta)$ is uniformly bounded, the choice $h_n(y,\theta) \equiv \sup_{a,\theta} L(a,\theta)$ in Theorem~\ref{thm:optimality} shows that $\delta_n$ is asymptotically optimal.  
\end{proof}

Things are a bit more challenging in the estimation problem in that more conditions are required to establish asymptotic optimality of the PR-based empirical Bayes rule.  Suppose, for example, that $\Theta \subseteq \RR$ and $L(a,\theta) = (a-\theta)^2$, square-error loss.  Then the Bayes rule is the posterior mean and, hence, the PR-based empirical Bayes rule is
\[ \delta_n(y) = \int_\Theta \theta \,dF_n(\theta \mid y) = \frac{1}{p_n(y)} \int_\Theta \theta p_\theta(y) \,dF_n(\theta). \]
Notice that conditions of Theorem~\ref{thm:pr} are not enough to conclude $\delta_n(y) \to \delta_F(y)$ a.s.~for each $y$.  For this to follow, we need $\theta \mapsto \theta p_\theta(y)$ to be bounded for each $y$; this is satisfied if, for example, $p_\theta$ is a ${\sf N}(\theta,1)$ density.  Since the loss is unbounded in general, finding a bounding sequence $h_n(y,\theta)$ as in Theorem~\ref{thm:optimality} must be done carefully case-by-case.  However, a general optimality result holds under the extra condition that $\Theta$ and, hence, $\AA$ are compact.  This is not really a restriction, in this case, since verifying condition A1 in Theorem~\ref{thm:pr} usually requires $\Theta$ to be compact anyway.  

\begin{prop}
\label{prop:est-optim}
If $L(a,\theta)$ is bounded on $\AA \times \Theta$, $\theta \mapsto \theta p_\theta(y)$ is bounded for each $y$, and the conditions of Theorem~\ref{thm:pr} hold, then the PR-based empirical Bayes rule $\delta_n(y)$ is asymptotically optimal in the sense of Definition~\ref{def:optimality}.  
\end{prop}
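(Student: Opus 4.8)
The plan is to verify the three hypotheses of Theorem~\ref{thm:optimality} on a single probability-one event and then invoke it directly. As in the proof of Proposition~\ref{prop:test-optim}, the boundedness of the loss will make the dominating-sequence condition essentially trivial, so the real work lies in establishing the pointwise convergence $\delta_n(y)\to\delta_F(y)$, which is exactly the step that needs the extra hypothesis on $\theta\mapsto\theta p_\theta(y)$.

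First I would show that, on the probability-one event where the conclusions of Theorem~\ref{thm:pr} hold, one has $\delta_n(y)\to\delta_F(y)$ for $\mu$-almost all $y$. Writing $\delta_n(y)=\{\int_\Theta \theta p_\theta(y)\,dF_n(\theta)\}/p_n(y)$ with $p_n(y)=\int_\Theta p_\theta(y)\,dF_n(\theta)$, the idea is to treat numerator and denominator separately via the weak convergence $F_n\to F$. By A2 the map $\theta\mapsto p_\theta(y)$ is bounded and continuous for $\mu$-almost all $y$, so for each such $y$ the denominator satisfies $p_n(y)\to p_F(y)$ by the definition of weak convergence. For the numerator, the hypothesis that $\theta\mapsto\theta p_\theta(y)$ is bounded, together with its continuity (inherited from A2), again yields $\int_\Theta\theta p_\theta(y)\,dF_n(\theta)\to\int_\Theta\theta p_\theta(y)\,dF(\theta)$. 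Restricting to the set where $p_F(y)>0$—and noting that $\{y:p_F(y)=0\}$ contributes nothing to the risk, since $\int_\Theta p_\theta(y)\,dF(\theta)=0$ forces $p_\theta(y)=0$ for $F$-almost all $\theta$ there—the ratio converges, giving $\delta_n(y)\to\delta_F(y)$ for $\mu$-almost all $y$ with $p_F(y)>0$. The loss-convergence hypothesis then follows at once, since $L(a,\theta)=(a-\theta)^2$ is continuous in $a$, whence $L(\delta_n(y),\theta)\to L(\delta_F(y),\theta)$ for $(\mu\times F)$-almost all $(y,\theta)$.

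For the dominating sequence I would simply take the constant $h_n(y,\theta)\equiv M:=\sup_{(a,\theta)\in\AA\times\Theta}L(a,\theta)$, which is finite by the boundedness hypothesis on $L$. This constant sequence converges to $h\equiv M$; it dominates $L(\delta_n(y),\theta)$ because $\delta_n(y)\in\AA$; and $\int_\YY\int_\Theta M\,p_\theta(y)\,dF(\theta)\,d\mu(y)=M<\infty$ since $p_F$ integrates to one, so the required integral convergence is immediate (both sides equal $M$). With all three hypotheses of Theorem~\ref{thm:optimality} verified on one probability-one event, asymptotic optimality relative to $\FF$ in the sense of Definition~\ref{def:optimality} follows.

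I expect the main obstacle to be the ratio argument: the factor $\theta$ in the numerator is unbounded when $\Theta$ is non-compact, so weak convergence cannot be applied to $\theta\mapsto\theta p_\theta(y)$ without the added boundedness assumption in the statement—this is precisely why more is demanded here than in the testing problem, where convergence of $\delta_n$ came for free from $F_n(\Theta_0\mid y)\to F(\Theta_0\mid y)$. A secondary point to handle carefully is the denominator: I must keep $p_n(y)$ bounded away from zero at the values of $y$ that actually enter the risk integral, which is what restricting attention to $\{y:p_F(y)>0\}$ accomplishes and what makes the pointwise limit well defined.
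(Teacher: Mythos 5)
Your proposal is correct and follows essentially the same route as the paper: verify the hypotheses of Theorem~\ref{thm:optimality} with the constant dominating sequence $h_n \equiv \sup_{a,\theta} L(a,\theta)$, using weak convergence of $F_n$ together with the boundedness and continuity of $\theta \mapsto p_\theta(y)$ and $\theta \mapsto \theta p_\theta(y)$ to get $\delta_n(y) \to \delta_F(y)$. The paper's proof is a one-liner that delegates the pointwise-convergence step to the discussion preceding the proposition; you have simply written out those details (including the harmless null set $\{y : p_F(y) = 0\}$) explicitly.
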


\begin{proof}[\indent Proof]
Take $h_n(x,\theta) \equiv \sup_{a,\theta} L(a,\theta)$ and apply Theorem~\ref{thm:optimality}.
\end{proof}

\section{Baseball example}
\label{S:examples}


\subsection{Model, data, and objectives}

Empirical Bayes analysis of hitting performance in major league baseball has been a recurring theme in the literature, e.g., \citet{efronmorris1973, efronmorris1975}, \citet{brown2008}, \citet{muralidharan2009}, and \citet{jiang.zhang.2010}.   In these papers, focus has been on using data on each players' batting performance in the first half of the season to simultaneously predict their batting performance in the second half of the season.  Due to the large number of players in consideration (roughly 500 in the analysis that follows), prediction is improved by pooling information across the different players.  Empirical Bayes is a particularly convenient way to perform this information sharing.  

The model setup is as follows.  In the first half of the season, Player $i$, $i=1,\ldots,n$, has $n_i$ at-bats, and his number of hits $Y_i$ is modeled as $Y_i \sim {\sf Bin}(n_i,\theta_i)$, where $\theta_i$ represents Player $i$'s latent hitting ability.  This is an unrealistic setup (for a variety of reasons), but makes for a relatively simple analysis.  The goal is first to estimate $(\theta_1,\ldots,\theta_n)$ based on data for all $n$ players from the first half of the season.  Then these estimates will be used to generate a prediction for the second half hitting performance, and the performance of the estimation procedure will be judged by how well the method predicts.  

The data consists of batting records for each player involved in the 2005 major league baseball season.  In Brown's study, he splits the data into first and second half statistics---these are the ``training'' and ``testing'' portions.  Some players had insufficient number of at-bats, and were removed from the sample.  So the essentially both training and testing portions contain data for the same players; the only caveat is that a few players with sufficient number of at-bats in the first half but an insufficient number in the second half (perhaps due to injury).  Brown also introduces a suitable variance-stabilizing transformation to take the original binomial data to approximately normal data, so that the standard procedures (e.g., James--Stein) can be easily applied.  Specifically, the new model is $X_i \sim {\sf N}(\xi_i,1/4n_i)$ (approximately), for $i=1,\ldots,n$, where $\xi_i = \arcsin\sqrt{\theta_i}$, and the goal is to simultaneously estimate $(\xi_1,\ldots,\xi_n)$ based on the first half data and then give a prediction of the observed $(X_1',\ldots,X_n')$ in the second half.  The reader is referred to \citet{brown2008} for details on the variance-stabilizing transformation [equation (2.2) in \citet[][p.~121]{brown2008}] and prediction error calculations [expression $\widehat{\text{\sc tse}}$ in \citet[][p.~126]{brown2008}]; suffice it to say that small prediction error is preferred.

\subsection{Results}

For data $X_i \sim {\sf N}(\xi_i,1/4n_i)$, a variety of methods are available for estimating $(\xi_1,\ldots,\xi_n)$.  One is to estimate $\xi_i$ with $X_i$; the performance of this ``naive'' procedure is taken as the baseline for comparison.  Another option is to estimate all $\xi_i$'s with the common sample mean $\overline{X}$, the group mean.  \citet{brown2008} describes a number of other, more sophisticated Bayes and empirical Bayes methods.  

\citet{muralidharan2009} describes a method---called mixfdr---which is based on a finite mixture model for the unknown prior distribution.  This method can be naturally applied directly to the binomial data, the $(Y_1,\ldots,Y_n)$, so the transformed data is not needed.  In this setting, he models the unknown prior $f(\theta)$ as a finite mixture of beta densities, and uses Type~II maximum likelihood to estimate the mixture model parameters.    

PR can also be applied to the binomial data directly.  The conditions for Theorem~\ref{thm:pr} can readily checked for this binomial problem; see Remark~\ref{re:pr.conditions}.  For the initial guess $f_0$, I employ some basic knowledge about the context to make an informative choice.  In particular, for pitchers, who tend to have lower batting averages, I take $f_0$ to be a ${\sf Beta}(30, 120)$ distribution, so that the mean is at 0.200.  Likewise, for non-pitchers, who typically have higher batting averages for pitchers, I take $f_0$ to be a ${\sf Beta}(30, 90)$, so that the mean is at 0.250.  For the weight sequence, consider $w_i = (i+1)^{-\gamma}$ as in Remark~\ref{re:pr.weight}.  If $\gamma$ is treated like a tuning parameter, we can choose the value of $\gamma$ to minimize Brown's prediction error.  This optimization problem was solved for the pitcher and non-pitcher sets separately, giving $\gamma=0.5$ for pitchers and $\gamma=0.9$ for non-pitchers.  Lastly, the results of the PR algorithm are averaged over 100 random permutations of the data to remove dependence on the original ordering.  

In Table~\ref{tab:baseball}, I repeat a portion of Muralidharan's Table~1, together with the corresponding PR results.  The results in the top portion of the table are based on the transformed data.  Since both PR and mixfdr are applied to the original data, the reported predictions used are the posterior means of $\arcsin\sqrt{\theta_i}$ based on the estimated priors.  In this case, the PR method is a clear winner when applied to the pitcher portion of the data, and is competitive in the non-pitcher portion, beating all methods except mixfdr, including the theoretically strong nonparametric empirical Bayes procedure of \citet{brown.greenshtein.2009}.  That PR performs well in the smaller-scale pitcher portion of the data suggests that it makes more efficient use of the limited information compared to other methods.  Figure~\ref{fig:baseball} shows both the PR and mixfdr estimates of the prior density $f(\theta)$ for both pitcher and non-pitcher batting averages.  In both cases, I would argue that the PR estimates are much more realistic than the mixfdr estimates.  For pitchers, the mixfdr estimate has some peculiar features.  That there seems to be two subgroups is itself not a concern, but the relative proportions are questionable: among pitchers, there may be a relatively small subgroup who are strong hitters, but the plot indicates that a majority of pitchers fall in this ``extraordinary'' group.  The PR estimate, on the other hand, is smooth and unimodal, with a slight skew to the right indicating a few skillful hitters as outliers in this group of pitchers.  For the non-pitchers, the support of the mixfdr estimate is questionable.  Many major league players hit higher than 0.300 on a regular basis, e.g., Ichiro Suzuki, arguably one of the best hitters in baseball, has a career batting average of 0.324, marked by a $\triangle$ in Figure~\ref{fig:baseball}(b).  This value is an extreme outlier under the mixfdr estimate, but sits nicely at the tip of the upper tail of the PR estimate.  On the other end, there are players who consistently hit near 0.200.  Typically these players are strong at defense to make up for their relatively poor offensive performance.  Henry Blanco, whose career batting average is 0.228, also marked by a $\triangle$ in Figure~\ref{fig:baseball}(b), is one such player.  Overall, this example suggests that PR-driven nonparametric empirical Bayes gives good results in the prediction problem, compared to a variety of methods in both pitcher and non-pitcher portions, and can also give a very reasonable picture of the distribution of latent hitting abilities.

\begin{table}[t]
\begin{center}
\begin{tabular}{ccc}
\hline
& Pitchers & Non-pitchers \\
\hline
\emph{Number of training players} & \emph{81} & \emph{486} \\
\emph{Number of test players} & \emph{64} & \emph{435} \\
Naive & 1 & 1 \\
Group mean & 0.127 & 0.378 \\
Parametric EB (MM) & 0.129 & 0.387 \\
Parametric EB (ML) & 0.117 & 0.398 \\
Nonparametric EB & 0.212 & 0.372 \\
James--Stein & 0.164 & 0.359 \\
Hierarchical Bayes & 0.128 & 0.391 \\
\hline 
mixfdr EB & 0.156 & 0.314 \\
{\bf PR-based EB} & {\bf 0.096} & {\bf 0.353} \\
\hline
\end{tabular}
\caption{\label{tab:baseball} Relative prediction errors for various empirical Bayes estimation methods in the baseball data example of \citet{brown2008} and \citet{muralidharan2009}.}
\end{center}
\end{table}

\begin{figure}[t]
\begin{center}
\subfigure[Estimated priors: pitchers]{\scalebox{0.6}{\includegraphics{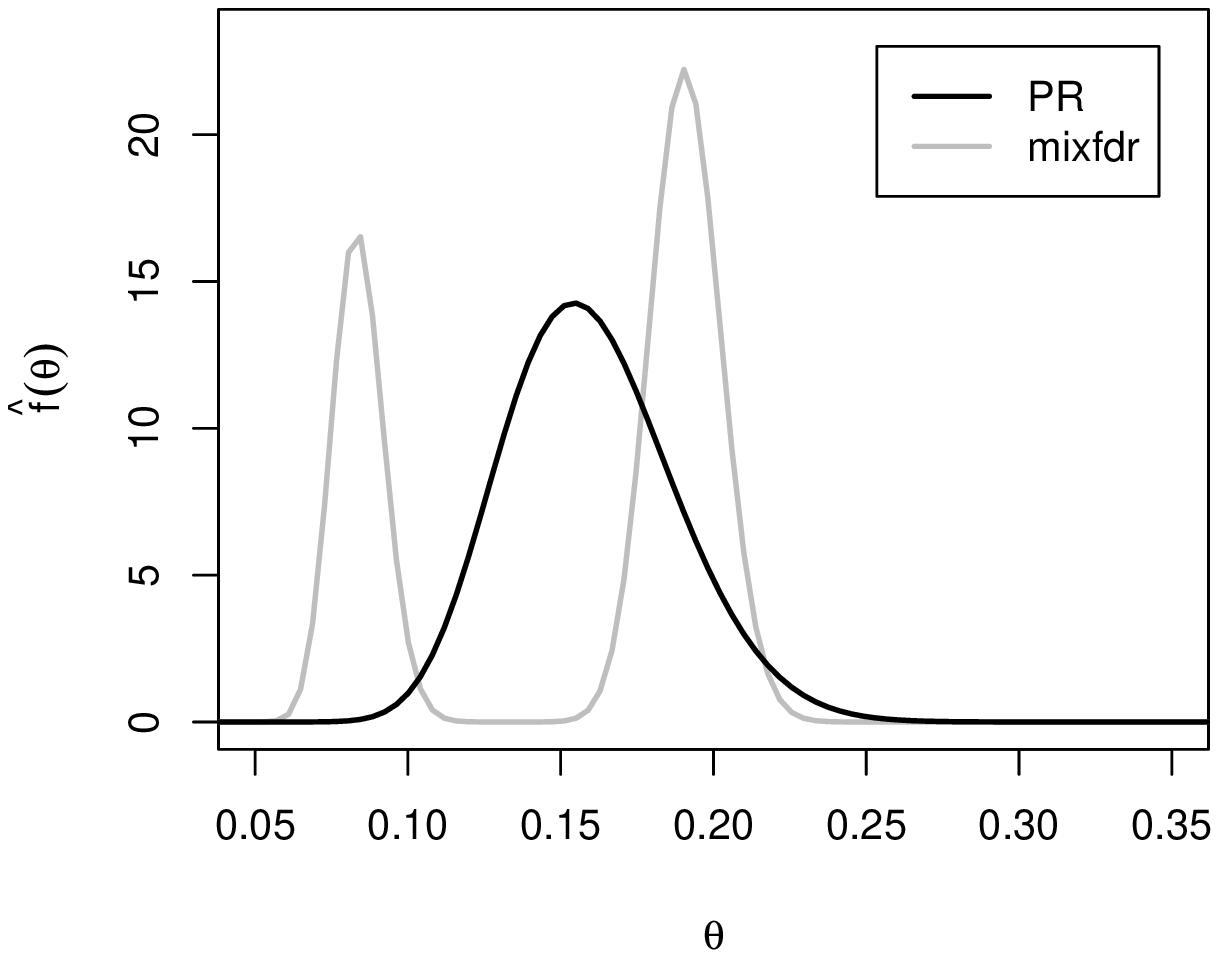}}}
\subfigure[Estimated priors: non-pitchers]{\scalebox{0.6}{\includegraphics{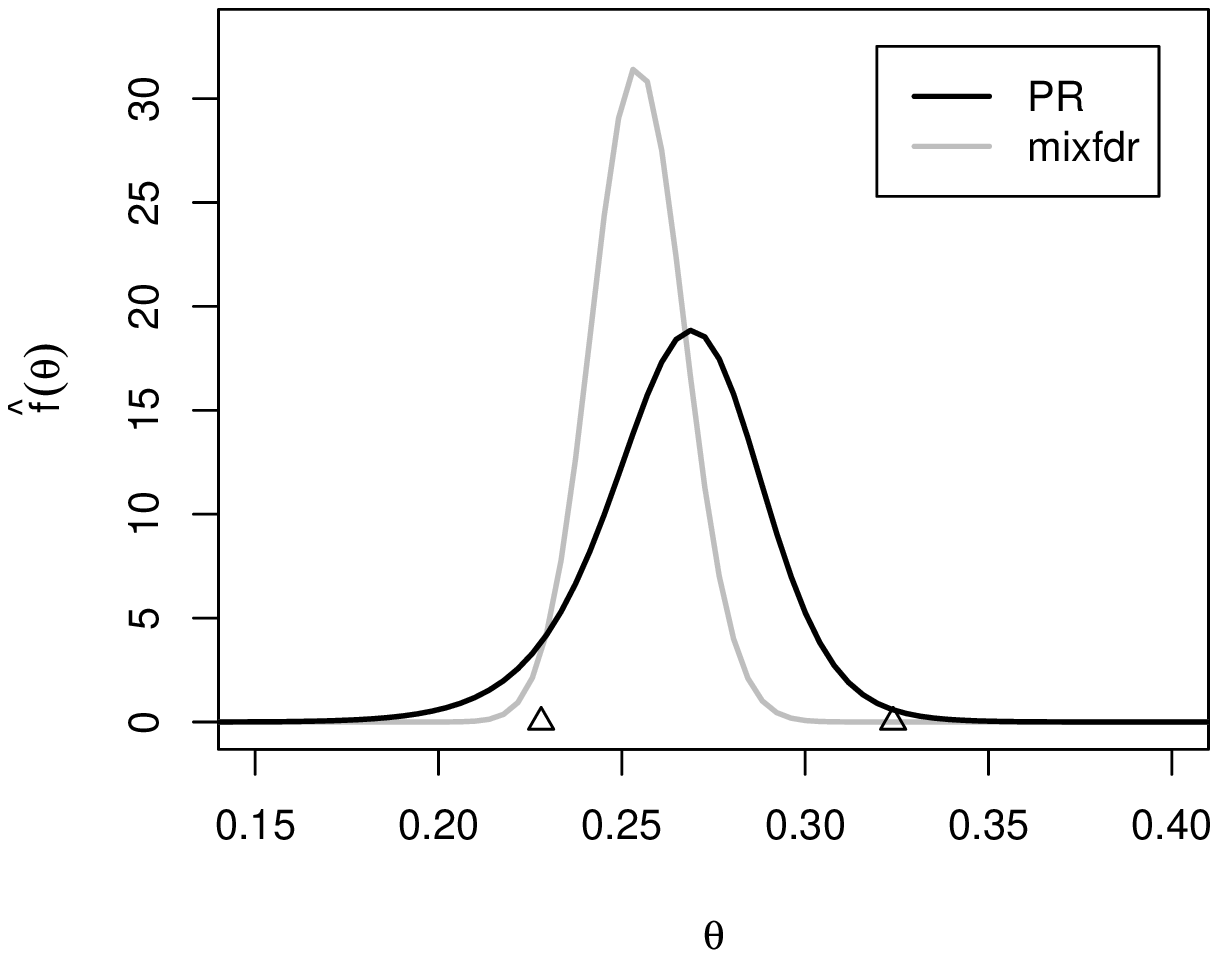}}}
\caption{Plots of estimates of the prior $f(\theta)$ based on PR and Muralidharan's mixfdr.  In panel (b), $\triangle$s mark the career batting averages of Henry Blanco (0.228) and Ichiro Suzuki (0.324), respectively (as of 2012).}
\label{fig:baseball}
\end{center}
\end{figure}

One possible extension of the above analysis is to effectively combine the pitcher and non-pitcher data together to achieve further sharing of information.  Ignoring the information contained in the pitcher/non-pitcher label is not an effective approach.  One possible alternative is to add another parameter to deal with the pitcher/non-pitcher information.  For example, if $X_i=1$ if player $i$ is a pitcher and $X_i=0$ otherwise, then the model could be modified as follows: $Y_i | (X_i,\theta_i) \sim {\sf Bin}(n_i, \omega^{X_i}\theta_i)$, $i=1,\ldots,n$, where $\omega \in (0,1)$ is an unknown shrinkage factor describing the overall discount in pitchers' hitting ability compared to non-pitchers'.  This approach can easily be handled within the predictive recursion marginal likelihood framework \citep{mt-prml}, but I shall omit these details here since it takes us outside the context where PR optimality results are available.

\section{Discussion}
\label{S:discuss}

In this paper I have considered the empirical Bayes approach to statistical inference and its implementation via the PR algorithm.  In particular, I have shown that PR-based empirical Bayes rules are asymptotically optimal under certain conditions.  The question of asymptotic optimality of empirical Bayes estimation in high-dimensional problems where, e.g., the level of sparsity depends on the dimension, is more challenging, and more work is needed to establish this for the PR procedure presented herein.  However, the fact that PR empirically outperforms methods (e.g., the nonparametric empirical Bayes procedure of \citet{brown.greenshtein.2009} appearing in the baseball example above) which are known to be asymptotically optimal in this strong sense suggests that the PR procedure has similar theoretical properties.  

Classical results on empirical Bayes analysis rely heavily on the concept of asymptotic $\E$-optimality.  I argue that asymptotic optimality in Definition~\ref{def:optimality} is more meaningful from a statistical point of view.  In either case, asymptotic optimality is clearly a desirable property; but one could certainly argue that asymptotic optimality is not the only quality to consider.  Robbins and others proposed empirical Bayes rules which were derived from, or at least motivated by, asymptotic optimality considerations.  These procedures often came under criticism since the justification based on asymptotic optimality was not convincing and their performance in real applications was unsatisfactory.  In this era of high-dimensional problems, the sample sizes needed for asymptotic optimality to be meaningful in practice are now readily available.  I argue that a procedure which is both asymptotic optimal and can be justified by other means is most convincing, and here I have shown that PR is such a procedure.  But when $n$ is large, there are many other justifiable alternatives---such as estimating $F$ by the method of maximum likelihood or the method of \citet{deelykruse1968}---which would also lead to asymptotically optimal procedures, so what makes PR stand out?  Although these alternatives have similar asymptotics, in finite samples they typically give estimates of $F$ which are discrete.  This is clearly inappropriate if vague prior information indicates that $F$ is continuous.  Another issue is identifiability.  In the ``two-groups'' problems considered in \citet{mt-test}, $F$ is assumed to have both discrete and continuous components.  The PR algorithm can easily handle this type of vague prior information, whereas maximum likelihood requires additional assumptions, for example, to identify each component.

\bibliographystyle{/Users/rgmartin/Research/TexStuff/asa}
\bibliography{/Users/rgmartin/Research/mybib}

\begin{thebibliography}{36}
\newcommand{\enquote}[1]{``#1''}
\expandafter\ifx\csname natexlab\endcsname\relax\def\natexlab#1{#1}\fi

\bibitem[{Benjamini and Hochberg(1995)}]{benjamini.hochberg.1995}
Benjamini, Y. and Hochberg, Y. (1995), \enquote{Controlling the false discovery
  rate: a practical and powerful approach to multiple testing,} \textit{J. Roy.
  Statist. Soc. Ser. B}, 57, 289--300.

\bibitem[{Berger(1984)}]{berger1984}
Berger, J.~O. (1984), \enquote{The robust {B}ayesian viewpoint,} in
  \textit{Robustness of {B}ayesian analyses}, Amsterdam: North-Holland, vol.~4
  of \textit{Stud. Bayesian Econometrics}, pp. 63--144, with comments and with
  a reply by the author.

\bibitem[{Brown(2008)}]{brown2008}
Brown, L. (2008), \enquote{In-season prediction of batting averages: a field
  test of empirical {B}ayes and {B}ayes methodologies,} \textit{Ann. Appl.
  Stat.}, 2, 113--152.

\bibitem[{Brown and Greenshtein(2009)}]{brown.greenshtein.2009}
Brown, L.~D. and Greenshtein, E. (2009), \enquote{Nonparametric empirical
  {B}ayes and compound decision approaches to estimation of a high-dimensional
  vector of normal means,} \textit{Ann. Statist.}, 37, 1685--1704.

\bibitem[{Copas(1969)}]{copas1969}
Copas, J.~B. (1969), \enquote{Compound decisions and empirical {B}ayes. ({W}ith
  discussion.),} \textit{J. Roy. Statist. Soc. Ser. B}, 31, 397--425.

\bibitem[{Deely and Kruse(1968)}]{deelykruse1968}
Deely, J.~J. and Kruse, R.~L. (1968), \enquote{Construction of sequences
  estimating the mixing distribution,} \textit{Ann. Math. Statist.}, 39,
  286--288.

\bibitem[{Deely and Zimmer(1976)}]{deelyzimmer1976}
Deely, J.~J. and Zimmer, W.~J. (1976), \enquote{Asymptotic optimality of the
  empirical {B}ayes procedure,} \textit{Ann. Statist.}, 4, 576--580.

\bibitem[{Efron(2003)}]{efron2003}
Efron, B. (2003), \enquote{Robbins, empirical {B}ayes and microarrays,}
  \textit{Ann. Statist.}, 31, 366--378.

\bibitem[{Efron(2008)}]{efron2008}
--- (2008), \enquote{Microarrays, empirical {B}ayes and the two-groups model,}
  \textit{Statist. Sci.}, 23, 1--22.

\bibitem[{Efron(2010)}]{efron2010book}
--- (2010), \textit{Large-scale inference}, vol.~1 of \textit{Institute of
  Mathematical Statistics Monographs}, Cambridge: Cambridge University Press.

\bibitem[{Efron and Morris(1973)}]{efronmorris1973}
Efron, B. and Morris, C. (1973), \enquote{Stein's estimation rule and its
  competitors---an empirical {B}ayes approach,} \textit{J. Amer. Statist.
  Assoc.}, 68, 117--130.

\bibitem[{Efron and Morris(1975)}]{efronmorris1975}
--- (1975), \enquote{Data analysis using {S}tein's estimator and its
  generalizations,} \textit{J. Amer. Statist. Assoc.}, 70, 311--319.

\bibitem[{Ghosh et~al.(2006)Ghosh, Delampady, and Samanta}]{ghosh-etal-book}
Ghosh, J.~K., Delampady, M., and Samanta, T. (2006), \textit{An introduction to
  {B}ayesian analysis}, New York: Springer.

\bibitem[{Ghosh and Tokdar(2006)}]{ghoshtokdar}
Ghosh, J.~K. and Tokdar, S.~T. (2006), \enquote{Convergence and consistency of
  {N}ewton's algorithm for estimating mixing distribution,} in
  \textit{Frontiers in Statistics}, eds. Fan, J. and Koul, H., London: Imp.
  Coll. Press, pp. 429--443.

\bibitem[{Jiang and Zhang(2009)}]{jiangzhang2009}
Jiang, W. and Zhang, C.-H. (2009), \enquote{General maximum likelihood
  empirical Bayes estimation of Normal means,} \textit{Ann. Statist.}, 37,
  1647--1684.

\bibitem[{Jiang and Zhang(2010)}]{jiang.zhang.2010}
--- (2010), \enquote{Empirical {B}ayes in-season prediction of baseball batting
  averages,} in \textit{Borrowing Strength: Theory Powering Applications -- A
  Festschrift for Lawrence D. Brown}, eds. Berger, J.~O., Cai, T.~T., and
  Johnstone, I.~M., Beachwood, OH: IMS, pp. 263--273.

\bibitem[{Lai(2003)}]{lai}
Lai, T.~L. (2003), \enquote{Stochastic approximation,} \textit{Ann. Statist.},
  31, 391--406.

\bibitem[{Lehmann and Casella(1998)}]{lehmann.casella.1998}
Lehmann, E.~L. and Casella, G. (1998), \textit{Theory of point estimation},
  Springer Texts in Statistics, New York: Springer-Verlag, 2nd ed.

\bibitem[{Lindsay(1995)}]{lindsay1995}
Lindsay, B.~G. (1995), \textit{Mixture Models: Theory, Geometry and
  Applications}, Haywood, CA: IMS.

\bibitem[{Martin(2012{\natexlab{a}})}]{prml-finite}
Martin, R. (2012{\natexlab{a}}), \enquote{An approximate {B}ayesian marginal
  likelihood approach for estimating finite mixtures,} {\it Comm. Statist.
  Simulation Comput.}, to appear. Preprint at {\tt arXiv:1106.4432}.

\bibitem[{Martin(2012{\natexlab{b}})}]{pr-finite}
--- (2012{\natexlab{b}}), \enquote{Convergence rate for predictive recursion
  estimation of finite mixtures,} \textit{Statist. Probab. Lett.}, 82,
  378--384.

\bibitem[{Martin and Ghosh(2008)}]{martinghosh}
Martin, R. and Ghosh, J.~K. (2008), \enquote{Stochastic approximation and
  {N}ewton's estimate of a mixing distribution,} \textit{Statist. Sci.}, 23,
  365--382.

\bibitem[{Martin and Tokdar(2009)}]{mt-rate}
Martin, R. and Tokdar, S.~T. (2009), \enquote{Asymptotic properties of
  predictive recursion: robustness and rate of convergence,} \textit{Electron.
  J. Stat.}, 3, 1455--1472.

\bibitem[{Martin and Tokdar(2011)}]{mt-prml}
--- (2011), \enquote{Semiparametric inference in mixture models with predictive
  recursion marginal likelihood,} \textit{Biometrika}, 98, 567--582.

\bibitem[{Martin and Tokdar(2012)}]{mt-test}
--- (2012), \enquote{A nonparametric empirical {B}ayes framework for
  large-scale multiple testing,} \textit{Biostatistics}, 13, 427--439.

\bibitem[{Muralidharan(2010)}]{muralidharan2009}
Muralidharan, O. (2010), \enquote{An empirical {B}ayes mixture method for
  effect size and false discovery rate estimation,} \textit{Ann. Appl.
  Statist.}, 4, 422--438.

\bibitem[{Newton(2002)}]{newton02}
Newton, M.~A. (2002), \enquote{On a nonparametric recursive estimator of the
  mixing distribution,} \textit{Sankhy\=a Ser. A}, 64, 306--322.

\bibitem[{Newton et~al.(1998)Newton, Quintana, and Zhang}]{nqz}
Newton, M.~A., Quintana, F.~A., and Zhang, Y. (1998), \enquote{Nonparametric
  {B}ayes methods using predictive updating,} in \textit{Practical
  nonparametric and semiparametric Bayesian statistics}, eds. Dey, D.,
  M{\"u}ller, P., and Sinha, D., New York: Springer, vol. 133 of
  \textit{Lecture Notes in Statist.}, pp. 45--61.

\bibitem[{Pratt(1960)}]{pratt1960}
Pratt, J.~W. (1960), \enquote{On interchanging limits and integrals,}
  \textit{Ann. Math. Statist.}, 31, 74--77.

\bibitem[{Robbins(1964)}]{robbins1964}
Robbins, H. (1964), \enquote{The empirical {B}ayes approach to statistical
  decision problems,} \textit{Ann. Math. Statist.}, 35, 1--20.

\bibitem[{Robbins and Monro(1951)}]{robbinsmonro}
Robbins, H. and Monro, S. (1951), \enquote{A stochastic approximation method,}
  \textit{Ann. Math. Statistics}, 22, 400--407.

\bibitem[{Samuel(1967)}]{samuel1967}
Samuel, E. (1967), \enquote{The compound statistical decision problem,}
  \textit{Sankhy\=a Ser. A}, 29, 123--140.

\bibitem[{Scott and Berger(2006)}]{scottberger}
Scott, J.~G. and Berger, J.~O. (2006), \enquote{An exploration of aspects of
  {B}ayesian multiple testing,} \textit{J. Statist. Plann. Inference}, 136,
  2144--2162.

\bibitem[{Stein(1981)}]{stein1981}
Stein, C.~M. (1981), \enquote{Estimation of the mean of a multivariate normal
  distribution,} \textit{Ann. Statist.}, 9, 1135--1151.

\bibitem[{Sun and Cai(2007)}]{suncai2007}
Sun, W. and Cai, T.~T. (2007), \enquote{Oracle and adaptive compound decision
  rules for false discovery rate control,} \textit{J. Amer. Statist. Assoc.},
  102, 901--912.

\bibitem[{Tokdar et~al.(2009)Tokdar, Martin, and Ghosh}]{tmg}
Tokdar, S.~T., Martin, R., and Ghosh, J.~K. (2009), \enquote{Consistency of a
  recursive estimate of mixing distributions,} \textit{Ann. Statist.}, 37,
  2502--2522.

\end{thebibliography}

\end{document}